\newcommand{\R}{{\mathbb R}}\newcommand{\N}{{\mathbb N}}
\newcommand{\Res}{\mathrm{Res}}
\newcommand{\Z}{{\mathbb Z}}\newcommand{\C}{{\mathbb C}}
\newcommand{\jap}[1]{\left\langle#1\right\rangle}
\let\epsilon\varepsilon
\let\hat\widehat
\newtheorem{theorem}{Theorem}[section]\newtheorem{lemma}[theorem]{Lemma}
\newtheorem{remark}[theorem]{Remark}
\title{Approximate Peregrine solitons in dispersive nonlinear wave equations}
\author{Guido Schneider, Nils Thorin \\
{\small
Institut f\"ur Analysis, Dynamik und Modellierung,} \\ {\small Universit\"at Stuttgart, Pfaffenwaldring 57, } \\ {\small 70569 Stuttgart, Germany}}
\begin{document}

\maketitle

\begin{abstract}
The purpose of this short note is to explain how the existing results on the validity of the NLS approximation can be extended from Sobolev spaces $ H^s(\R) $ to the 
spaces of functions $ u = v + w $ where $ v \in H_{{per}}^s $ and $ w \in H^s(\R) $.
This allows us to use the Peregrine solution of the NLS equation to find freak or rogue wave dynamics in more complicated systems.
\end{abstract}

\section{Introduction}

The NLS equation 
$$ 
i \partial_T A = \nu_1 \partial_X^2 A + \nu_2  A |A|^2, \qquad (\nu_1,\nu_2 \in \R)
$$
can be derived by multiple scaling perturbation analysis for the description of  slow modulations in time and space of the envelope of spatially and temporarily oscillating wave packets, as they appear in nonlinear optics, water wave theory, plasma
physics, waves in DNA, Bose-Einstein condensates, etc.

As an example we consider  the cubic Klein-Gordon equation
\begin{equation} \label{eq:3a1}
\partial_t^2 u = \partial_x^2 u - u + u^3,
\end{equation}
for $ x,t,u(x,t) \in \R $. We make the ansatz 
\begin{equation} \label{ansatz1}
	u(x,t) = \varepsilon \Psi_{NLS}(x,t) = \varepsilon A(\varepsilon(x-c_g t),\varepsilon^2 t) e^{i(k_0 x-\omega_0 t)} + c.c.
\end{equation}
with spatial and temporal wave number $ k_0 > 0 $ and $ \omega_0 \in \R $, 
with linear group velocity $ c_g \in \R $,
with envelope function 
$ A = A(X,T) \in \C $, scaled variables $X=\varepsilon(x-c_g t)$, $T=\varepsilon^2 t$,
and small perturbation parameter $ 0 < \varepsilon \ll 1 $.
By inserting the ansatz in \eqref{eq:3a1}
and by equating the coefficients in front of $  \varepsilon^j e^{i(k_0 x-\omega_0 t)} $ 
to zero for $ j = 1,2,3 $, we find the linear dispersion relation $ \omega_0^2 = k_0^2 + 1 $, the 
linear group velocity, here $ c_g = k_0/\omega_0  $, and finally 
the NLS equation
\begin{equation} \label{nls5}
2 i \omega_0 \partial_{T} A = (c_g^2-1)\partial_{X}^2 A - 3 A |A|^2.
\end{equation}

\begin{figure}[htbp] \centering

  \includegraphics[width=14cm]{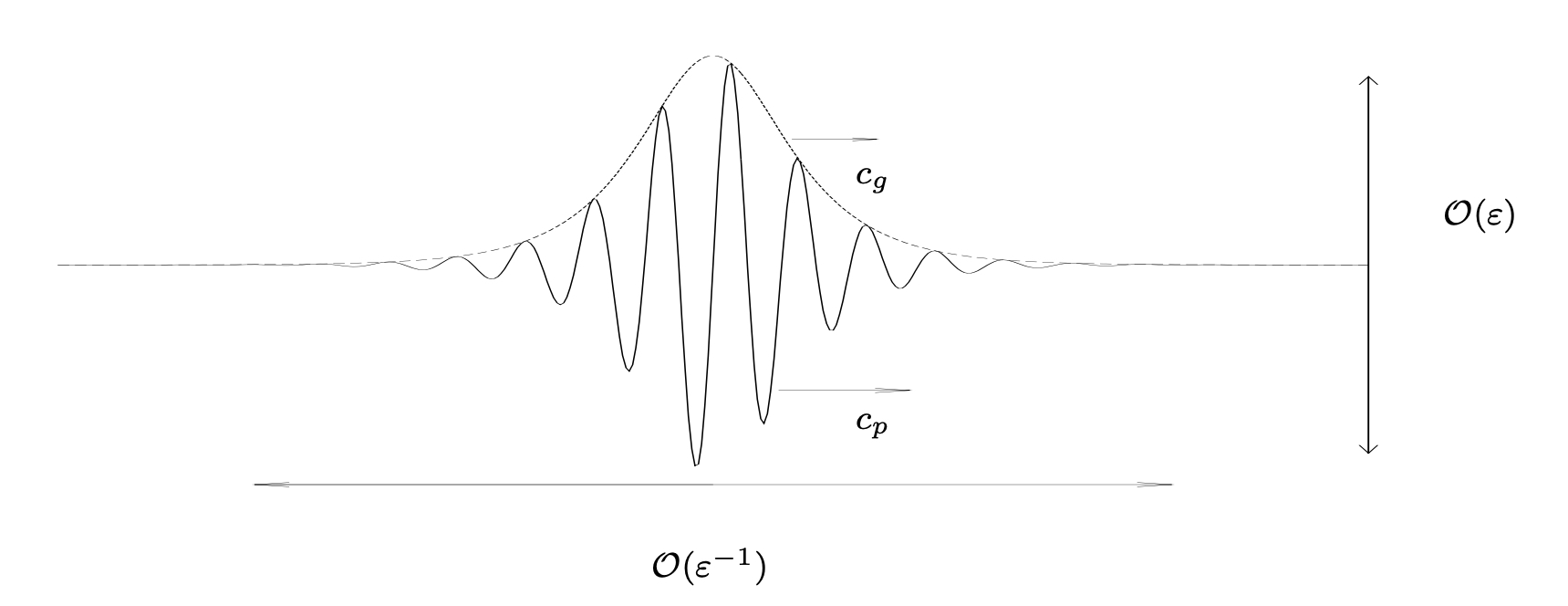} 
\medskip

  \caption{{\small  A modulating 
pulse described by the NLS equation. The envelope, advancing with 
group velocity $ c_g $ in the laboratory frame,  modulates the underlying 
carrier wave  $ e^{i (k_0 x - \omega_0 t)} $, advancing with 
phase velocity $ c_p $. The envelope evolves approximately as a solution of 
the NLS equation.}}
  \label{nn1}
\end{figure}

Several NLS approximation results have been established in the last decades.
For the above cubic Klein-Gordon equation we have for instance \cite{KSM92}: 
\medskip

{\bf Theorem.}
 Let $A \in C ([0,T_0], H^{4})$ 
  be a
  solution of the NLS equation \eqref{nls5}.  Then there exists an
  $\varepsilon_0 > 0$ and a $C>0$ such that for all $\varepsilon \in (0,\varepsilon_0)$
  there are solutions $u$ of the original system \eqref{eq:3a1} which
  can be approximated by $\varepsilon \Psi_{NLS} $ 
 with
\[
\sup_{t\in [0,T_{0}/\varepsilon^2]} \|u(t)-\varepsilon \Psi_{NLS}(t)\|_{H^{1}} < C\varepsilon^{3/2}.
\]
The NLS equation was first derived by Zakharov in 1968 for the water wave problem \cite{Za68}.
In general,
error estimates are non-trivial for quadratic nonlinearities since solutions of order $ \mathcal{O}(\varepsilon) $ 
have to be controlled on a time scale of order $ \mathcal{O}(1/\varepsilon^2) $.
The results are based on near identity change of variables using the oscillatory character 
of the  $ \mathcal{O}(\varepsilon) $-terms in the error equations, cf. \cite{Ka87}.
Resonant terms can however lead to non-approximation results. There are rigorous counter examples 
that the NLS approximation fails to make correct predictions, see \cite{SSZ15}.
An  NLS approximation result for the water wave problem  can be found for instance in \cite{Du21}.

It is the purpose of this short note to explain how the existing approximation results can be extended from Sobolev spaces $ H^s(\R) $ to the 
spaces of functions $ u = v + w $ where $ v \in H_{{per}}^s $ and $ w \in H^s(\R) $.
This allows us to use the Peregrine solution of the NLS equation to find rogue wave dynamics in more complicated systems, too.
Rogue waves, also known as freak waves or killer waves, are large and unpredictable surface waves that can be extremely dangerous to ships. 
Their height is significantly greater than that of average waves, and they seem to appear out of nowhere. These phenomena are not confined to water waves; they also occur in liquid helium, non-linear optics, and microwave cavities, see \cite{roguebook}.

The NLS equation in its normalized form 
$$
i \partial_\tau \psi+ \frac{1}{2}  \partial _\xi^2\psi+ |\psi|^2 \psi = 0
$$
possesses so called 
Peregrine solutions
\begin{equation}
A(\xi, \tau) =  \left[ 1-\frac{4 (1 + 2 i \tau)}{1+4 \xi^2 + 4 \tau^2}  \right] e^{i \tau}
\end{equation}  
which were discovered by Peregrine \cite{Pe83} and are prototype of a rogue wave. See 
Figure \ref{perefig1}.

\begin{figure}
  \centering
  \begin{minipage}[b]{0.45\textwidth}\label{peremod}
    \includegraphics[width=\textwidth]{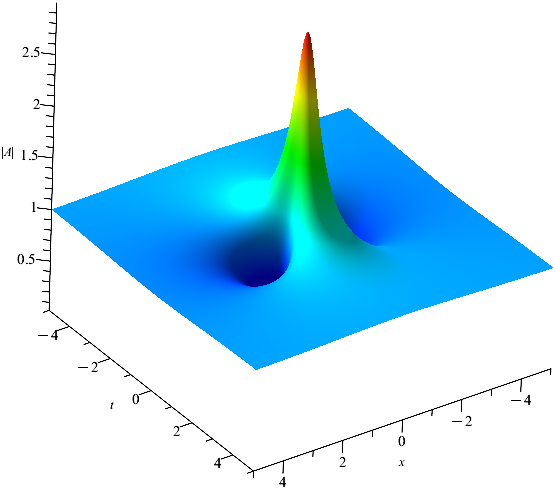}
  \end{minipage}
  \hfill
  \begin{minipage}[b]{0.45\textwidth}\label{perereal}
    \includegraphics[width=\textwidth]{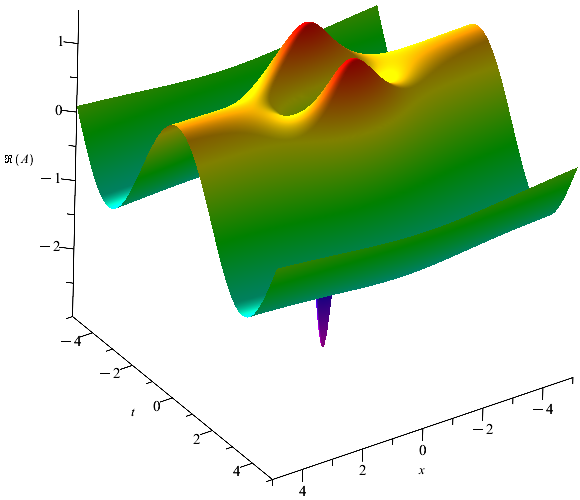}
  \end{minipage}
  \caption{Absolute value (left) and real part (right) of the Peregrine soliton plotted as a function of space and time.}
  \label{perefig1}
\end{figure}

The existing NLS approximation results do not apply since this solution does not decay to zero
for $ |\xi| \to \infty$. However, it can be written as
$$
A(\xi, \tau) = e^{i \tau} + B(\xi,\tau),
$$
with $ B(\cdot,\tau) \in W^{s,p}(\R) $
for all $ p \in [1,\infty]$ and $ s \in \N_0$.
In a similar manner we split the ansatz, defined in \eqref{ansatz1}, into
$$
\varepsilon \Psi_{NLS}(x,t) = \varepsilon \Psi_v(x,t) + \varepsilon \Psi_w(x,t),
$$
with 
$$ \varepsilon \Psi_v(x,t) = \varepsilon \Psi_v(x+ 2 \pi/k_0,t) , \quad
 \varepsilon \Psi_v(\cdot,t) \in W^{s,p}_{per},  $$ 
 and
 $$
 \varepsilon \Psi_w(\cdot,t) \in W^{s,p}(\R) .$$
In a similar manner we split the solution $ u $ of the cubic Klein-Gordon equation into 
$$ 
u = v + w ,
$$ 
with 
$$ 
v(x,t) = v(x+ 2 \pi/k_0,t), \quad v(\cdot,t) \in W^{s,p}_{per} ,
\quad \textrm{and} \quad
w(\cdot,t) \in W^{s,p}(\R). 
$$
The new variables $ v $ and $ w $ satisfy 
\begin{eqnarray} \label{veq}
\partial_t^2 v & = & \partial_x^2 v - v + v^3 ,
\\
\partial_t^2 w & = & \partial_x^2 w - w + 3 v^2 w + 3 v w^2 + w^3 .\label{weq}
\end{eqnarray}
Then we have the following approximation result 
\begin{theorem} \label{thmain}
Fix $s > 1/2$ and
let $ A = A_v + A_w$, with 
$$(A_v, A_w) \in C([0,T_0],\C\times H^{s+4}(\R)) ,$$ 
be a solution of the NLS equation \eqref{nls5}.
Then there exist $ \varepsilon_0 > 0 $ and $ C > 0 $ such that for all
$ \varepsilon \in (0, \varepsilon_0) $ there are solutions 
$$(v,w) \in C([0,T],H^s_{per}\times H^s(\R))$$ 
of the KG equation \eqref{eq:3a1} with
$$
\sup_{t \in [0,T_0/ \varepsilon^2]} \| (v,w)(\cdot,t) - (\varepsilon\Psi_v,\varepsilon\Psi_w)(\cdot,t) \|_{H^s_{per} \times H^s} \leq C \varepsilon^{3/2}.
$$
\end{theorem}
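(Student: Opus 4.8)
The plan is to exploit the \emph{triangular} structure of the system \eqref{veq}--\eqref{weq}: equation \eqref{veq} for the periodic part $v$ is autonomous and does not involve $w$, so I would first construct $v$ and only afterwards solve \eqref{weq} for $w$, with $v$ entering as a prescribed $2\pi/k_0$-periodic coefficient. The decomposition $A=A_v+A_w$ is compatible with this splitting: inserting it into \eqref{nls5} and using that $A_v$ is independent of $X$ shows that $A_v$ solves the spatially homogeneous NLS equation $2i\omega_0\partial_T A_v=-3A_v|A_v|^2$, an ODE whose solutions have constant modulus and purely rotating phase, while $A_w$ solves a perturbed NLS equation whose nonlinearity $-3(A_v+A_w)|A_v+A_w|^2+3A_v|A_v|^2$ contains at least one factor $A_w$ and therefore preserves the decay $A_w(\cdot,T)\in H^{s+4}(\R)$. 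Consequently $\varepsilon\Psi_v$ is $2\pi/k_0$-periodic and $\varepsilon\Psi_w$ lies in $H^s(\R)$, as required.

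First I would treat the periodic problem. Since $A_v$ depends on $T$ alone, $\varepsilon\Psi_v$ is a slowly modulated carrier wave whose only active Fourier modes are integer multiples of $k_0$, so the classical NLS approximation machinery of \cite{KSM92} applies verbatim in $H^s_{per}$, with Fourier series replacing the Fourier transform. Concretely I would enlarge the ansatz by harmonics $\varepsilon^2 A_{v,0}+\varepsilon^2 A_{v,2}e^{2i(k_0x-\omega_0t)}+c.c.$ and a second-order amplitude correction, chosen — using the dispersion relation $\omega_0^2=k_0^2+1$ and the non-resonance of the generated frequencies — so that the residual $\mathrm{Res}_v$ obtained by inserting the improved ansatz $\varepsilon\Psi_v^{\mathrm{imp}}$ into \eqref{veq} is of order $\mathcal{O}(\varepsilon^{7/2})$ in $H^s_{per}$. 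The exponent is dictated by the error equation: writing $v=\varepsilon\Psi_v^{\mathrm{imp}}+\varepsilon^{3/2}R_v$, one needs $\varepsilon^{-3/2}\!\int_0^{T_0/\varepsilon^2}\|\mathrm{Res}_v\|\,dt=\mathcal{O}(1)$. Using the Klein--Gordon energy $E_v=\tfrac12\bigl(\|\partial_tR_v\|_{H^s}^2+\|\partial_xR_v\|_{H^s}^2+\|R_v\|_{H^s}^2\bigr)$, the error equation has $\mathcal{O}(\varepsilon^2)$ linear coefficients and forcing $\varepsilon^{-3/2}\mathrm{Res}_v=\mathcal{O}(\varepsilon^2)$; since $H^s_{per}$ is an algebra for $s>1/2$ the cubic terms stay bounded as long as $R_v$ does, and Gronwall on $[0,T_0/\varepsilon^2]$ gives $\sup_t\|R_v\|_{H^s}=\mathcal{O}(1)$, i.e. the claimed $\mathcal{O}(\varepsilon^{3/2})$ bound for $v$.

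Next, with $v$ in hand, I would solve \eqref{weq}. The only genuinely new linear ingredient is the term $3v^2w$, a potential that is $2\pi/k_0$-periodic in $x$ and at leading order equals $3\varepsilon^2\Psi_v^2w$; its $x$-independent mean $6\varepsilon^2|A_v|^2w$ and its oscillatory part at wavenumber $2k_0$ are precisely the contributions that the perturbed NLS equation for $A_w$ was built to reproduce. Passing again to an improved ansatz $\varepsilon\Psi_w^{\mathrm{imp}}$ with $H^s(\R)$-residual $\mathcal{O}(\varepsilon^{7/2})$ — here the hypothesis $A_w\in H^{s+4}(\R)$ is used, since the residual contains up to fourth-order $X$-derivatives of $A_w$ together with the $\varepsilon^{-1/2}$ loss from the long-wave scaling $X=\varepsilon(x-c_gt)$ — I set $w=\varepsilon\Psi_w^{\mathrm{imp}}+\varepsilon^{3/2}R_w$. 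The error $R_w$ then solves a Klein--Gordon equation whose linear part carries the $\mathcal{O}(\varepsilon^2)$ periodic potential, contributing only a term $C\varepsilon^2E_w$ to the energy balance, exactly at the threshold Gronwall tolerates over times $\mathcal{O}(1/\varepsilon^2)$. The dependence on the \emph{exact} $v=\varepsilon\Psi_v+\varepsilon^{3/2}R_v$ produces extra forcing $\sim\varepsilon^{5/2}\Psi_vR_w\,w$ and $\sim\varepsilon^{5/2}\Psi_vR_v\,w$, which is higher order and bounded because $R_v$ was already controlled; this is where the triangular structure is essential, as it removes the need for a fully coupled Gronwall argument.

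I expect the main obstacle to be the rigorous treatment of the \emph{oscillatory} part of the potential $3v^2w$ in the energy estimate for $R_w$. A crude bound gives $\|3v^2w\|_{H^s}\le C\varepsilon^2\|w\|_{H^s}$, which suffices for the $\mathcal{O}(\varepsilon^2)$ Gronwall factor, but one must ensure that the harmonics at wavenumber $\pm 2k_0$ and frequency $\pm 2\omega_0$ generated by this term do not resonate with the linear Klein--Gordon dispersion of $w$; a resonance would let the $\mathcal{O}(\varepsilon^2)$ coefficient drive secular growth and destroy the estimate. The resolution is the near-identity normal-form transform of \cite{Ka87,KSM92}: a change of variables $R_w\mapsto R_w+\varepsilon^2(\ldots)$ that removes the oscillatory interaction terms, paying with denominators $\omega_0^2-(\text{shifted dispersion})$ that are bounded away from zero by virtue of $\omega_0^2=k_0^2+1$. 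Verifying that these denominators do not vanish for the specific harmonics produced here, and that the transform is invertible uniformly in $\varepsilon$ on the mixed space $H^s_{per}\times H^s(\R)$, is the technical heart of the argument.
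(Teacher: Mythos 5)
Your overall skeleton (improved ansatz, residual estimates, error functions scaled by $\varepsilon^{3/2}$, Gronwall over $[0,T_0/\varepsilon^2]$, with the $\varepsilon^{-1/2}$ loss for the decaying part and the algebra property for $s>1/2$) matches the paper, and your sequential use of the triangular structure is a legitimate variant of the paper's coupled Gronwall estimate for $\|\widehat{\mathcal{R}}_v\|_{\ell^2_s}+\|\widehat{\mathcal{R}}_w\|_{L^2_s}$. However, there is a concrete error in your construction of the improved approximation. You propose to add harmonics $\varepsilon^2 A_{v,0}+\varepsilon^2 A_{v,2}e^{2i(k_0x-\omega_0t)}+c.c.$; these are the corrections appropriate for a \emph{quadratic} nonlinearity, and for the cubic Klein--Gordon equation they are never generated (the equations determining them have zero forcing, so $A_{v,0}=A_{v,2}=0$). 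What the cubic term $u^3$ actually produces, and what must be cancelled, is the \emph{third} harmonic: $u^3$ contributes $\varepsilon^3 A^3 e^{3i(k_0x-\omega_0t)}+c.c.$, which is removed by adding $\varepsilon^3 A_3 e^{3i(k_0x-\omega_0t)}+c.c.$ with $(9k_0^2-9\omega_0^2+1)A_3=A^3$, the denominator being $-8\neq 0$ by the dispersion relation. With the corrections you specify, the residual remains $\mathcal{O}(\varepsilon^3)$ formally (and $\mathcal{O}(\varepsilon^{5/2})$ in $L^2(\R)$ for the decaying part), not $\mathcal{O}(\varepsilon^{7/2})$; then the forcing $\varepsilon^{-3/2}\mathrm{Res}$ integrated over a time interval of length $T_0/\varepsilon^2$ is of size $\varepsilon^{-1/2}$ and the Gronwall argument does not close.

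Your final paragraph identifies a ``technical heart'' that does not exist in this problem. You first observe, correctly, that $\|3v^2w\|_{H^s}\le C\varepsilon^2\|w\|_{H^s}$ is exactly at the threshold Gronwall tolerates on the time scale $\mathcal{O}(1/\varepsilon^2)$, but then claim that a resonance between the harmonics of $v^2$ and the Klein--Gordon dispersion would ``destroy the estimate'' and must be removed by a near-identity normal-form transform. This is a contradiction: Gronwall's inequality is insensitive to resonances; once the linear-in-$R_w$ terms carry the factor $\varepsilon^2$, the estimate $Z(t)\le C e^{C\varepsilon^2 t}\le Ce^{CT_0}$ holds for $t\le T_0/\varepsilon^2$ whether or not the oscillatory potential resonates with the dispersion relation. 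Normal-form transformations in the sense of \cite{Ka87,KSM92} are needed precisely when the nonlinearity is quadratic, so that the dangerous terms are only $\mathcal{O}(\varepsilon)$ and naive Gronwall would produce $e^{C/\varepsilon}$. The paper states this explicitly: for the cubic nonlinearity, the variation-of-constants formula plus Gronwall is sufficient, and no normal-form step appears anywhere in its proof. So the part of your argument you expect to be hardest is vacuous, while the part you treat as routine (the choice of the higher-harmonic correction) is where your proposal actually fails as written; once the third-harmonic correction replaces your second-harmonic one, the rest of your argument goes through essentially as in the paper.
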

Since we have a cubic nonlinearity the application of Gronwall's inequality 
to the variation of constant formula is sufficient for getting an $ \mathcal{O}(1) $-bound 
for $ R_v $ and $ R_w $ on the long $ \mathcal{O}(1/\varepsilon^2) $-time scale.
We expect that most of the existing NLS approximation theory (the handling of quadratic terms by normal form 
transformations, cf. \cite{Ka87}, the handling of resonances, cf. \cite{Schn05}, and quasilinear systems, cf. \cite{Du17,WC17,DH18}) can be transferred to these 
function spaces.  
There are other interesting solutions of the NLS equation having the same 
analytic properties as the Peregrine solutions. Hence the above
 approximation result also applies to so called 
 Kuznetsov-Ma solitons and higher order Peregrine solitons. See Section \ref{secdisc}.
 \medskip 
 
 \noindent
{\bf Acknowledgement.}
The paper 
is partially supported by the Deutsche Forschungsgemeinschaft DFG through the SFB 1173 ``Wave phenomena" Project-ID 258734477.

\section{The functional analytic set-up}

In this section we introduce the functional analytic set-up which we need to prove 
Theorem \ref{thmain}.

{\bf A)} In order to solve \eqref{veq} we use the space 
$$ 
H^s_{{per,k_0}} = \{ v : \R \to \R : v(x) = v(x+ 2 \pi/k_0), \| v \|_{H^s_{{per}}} < \infty\}
$$ 
for a $ k_0 > 0 $, with 
$$ 
\|{v}\|_{H^s_{{per}}} = 
\| \hat{v} \|_{\ell^2_s} = 
\|({\left\langle k \right\rangle}^{s/2}\hat{v}_k)_{k\in\Z}\|_{\ell^2} = \left(\sum_{k\in\Z} |\widehat{v}_k|^2 (1+k^2)\right)^{1/2},
$$
where $ v(x) = \frac{k_0}{2 \pi}\sum_{k \in \Z} \widehat{v}_k e^{ik k_0 x} $,
i.e.,  $(\hat{v}_k)_{k\in\Z}$ is the  discrete Fourier transform of $v$. 
In the following we write $ 
H^s_{{per}} $ instead of $ 
H^s_{{per,k_0}} $ if no confusion is possible. 
The space $ H^s_{{per}} $ is closed under pointwise multiplication
if $ s > 1/2 $ and can be embedded in $ C^0_{b,unif} $ if $ s > 1/2 $.
In detail,
for each $s>1/2$ there exists a $C>0$ such that for all $v_1, v_2 \in H^s_{{per}}$ we have
	\begin{equation*}
		\|v_1 v_2 \|_{H^s_{{per}}} \leq C\|v_1\|_{H^s_{{per}}}\|v_2\|_{H^s_{{per}}}.
	\end{equation*}

{\bf B)} In order to solve \eqref{weq} we use the space 
$$ 
H^s = H^s(\R) = \{ w : \R \to \R :  \| w \|_{H^s} < \infty\},
$$
with 
$$ 
\|w\|_{H^s} = \| \widehat{w} \|_{L^2_s} = \|\left\langle \cdot \right\rangle^{s/2} \widehat{w}(\cdot) \|_{L^2} = \left(\int (1+ k^2)^s |\widehat{w}(k)|^2 dk\right)^{1/2},
$$
i.e., $\hat{w}$ is the continuous  Fourier transform of $w$.
The space $ H^s(\R) $ is closed under pointwise multiplication
if $ s > 1/2 $ and can be embedded in $ C^0_{b,unif} $ if $ s > 1/2 $.
In detail,
for each $s>1/2$ there exists a $C>0$ such that for all $w_1, w_2 \in H^s(\R) $ we have
	\begin{equation*}
		\|w_1 w_2 \|_{H^s} \leq C\|w_1\|_{H^s}\|w_2\|_{H^s}.
	\end{equation*}

{\bf C)} In \eqref{weq} there are multiplications between $ v \in H^s_{{per}} $
and $ w \in H^s $, too.
For $ s > 1/2 $ these can be estimated with 
	\begin{eqnarray*}
		\|{v w}\|_{H^s} &= & \|{{\left\langle k \right\rangle}^{s/2} \hat{v w}(k)}\|_{L^2(dk)} \leq \left\|{\int {\left\langle k \right\rangle}^{s/2} |\hat{v}(k-\ell)\hat{w}(\ell)| d\ell}\right\|_{L^2(dk)}\\
		&\leq& C \left\|{\int {\left\langle k-\ell \right\rangle}^{s/2}|\hat{v}(k-\ell)\hat{w}(\ell)| d\ell +  \int {\left\langle \ell \right\rangle}^{s/2}|\hat{v}(k-\ell)\hat{w}(\ell)| d\ell}\right\|_{L^2}\\
		&\leq & C \|{|{\left\langle \cdot \right\rangle}^{s/2}\hat{v}| * |\hat{w}| + |\hat{v}| * |{\left\langle \cdot \right\rangle}^{s/2}\hat{w}|}\|_{L^2}\\
		&\leq &C\left(\|{{\left\langle \cdot \right\rangle}^{s/2} \hat{v}}\|_{\ell^2} \|{\hat{w}}\|_{L^1} + \|{\hat{v}}\|_{\ell^1}\|{\left\langle \cdot \right\rangle}^{s/2}\hat{w}\|_{L^2}\right)\\
		& = & C(\|{\hat{v}}\|_{\ell^2_{s}}\|{\hat{w}}\|_{L^1} + \|{\hat{v}_i}\|_{\ell^1_{0}}\|{\hat{w}}\|_{L^2_s})\\
		& \leq & 2C\|{v}\|_{H^s_{{per}}}\|{w}\|_{H^s},
	\end{eqnarray*}
	where we  used the identity $${\left\langle k \right\rangle}^s\leq C(\jap{k-\ell}^s + \jap{\ell}^s),$$  in the second inequality, Young's inequality in the fourth inequality, and Sobolev's embedding theorem in the last inequality.

\begin{remark}{\rm 
The goal of this paper can also be reformulated as follows.
It is the purpose of this short note is to explain how the existing results 
about the validity of the NLS approximation
can be extended from Sobolev spaces to spaces formally defined by
\begin{equation*} 
    M^s = \{u = v+w :  v\in {H^s_{{per}}}, w\in H^s\},
\end{equation*}
with the corresponding norm
\begin{equation*}
	\|u\|_{M^s} =  \| v \|_{H^s_{{per}}} + \|w\|_{H^s}.
\end{equation*}
As a direct consequence of the above estimates 
the spaces $ M^s $ are algebras for $s>1/2$, i.e.,
for each $s>1/2$ there exists a $C>0$ such that for all $u_1, u_2 \in {M^s}$ we have
	\begin{equation*}
		\|u_1 u_2 \|_{M^s} \leq C\|u_1\|_{M^s}\|u_2\|_{M^s}.
	\end{equation*}}
\end{remark}

\section{Construction of an improved approximation}

We follow the lines of \cite{KSM92} and consider the improved approximation
\begin{eqnarray*}
u(x,t) = \varepsilon \Psi(x,t) & = & \varepsilon A(\varepsilon(x-c_g t),\varepsilon^2 t) e^{i(k_0 x-\omega_0 t)}  \\ && + 
 \varepsilon^3 A_3(\varepsilon(x-c_g t),\varepsilon^2 t) e^{3 i(k_0 x-\omega_0 t)}
+ c.c..
\end{eqnarray*}
By this choice we have that the so called residual
$$ 
\textrm{Res}  =  - \partial_t^2 u + \partial_x^2 u - u + u^3,
$$ 
i.e., the terms which do not cancel after inserting the approximation into the equation, is formally of order $ \mathcal{O}(\varepsilon^4) $.
For estimating the approximation and the residual we split $ A = A_v + A_w $
into a spatially constant part $ A_v $ and into a spatially decaying part $ A_w $.
Similarly, we split $ A_3 = A_{3,v} + A_{3,w} $. Thus the improved approximation is then of the 
form 
$$ 
u(x,t) = \varepsilon \Psi(x,t) = v(x,t)+w(x,t) = \varepsilon \Psi_v(x,t) + \varepsilon \Psi_w(x,t),
$$ 
where 
\begin{eqnarray*}
 \varepsilon \Psi_v(x,t) & = & \varepsilon A_v(\varepsilon(x-c_g t),\varepsilon^2 t) e^{i(k_0 x-\omega_0 t)}  \\ && + 
 \varepsilon^3 A_{3,v}(\varepsilon(x-c_g t),\varepsilon^2 t) e^{3 i(k_0 x-\omega_0 t)}
+ c.c. ,\\
 \varepsilon \Psi_w(x,t) & = & \varepsilon A_w(\varepsilon(x-c_g t),\varepsilon^2 t) e^{i(k_0 x-\omega_0 t)}  \\ && + 
 \varepsilon^3 A_{3,w}(\varepsilon(x-c_g t),\varepsilon^2 t) e^{3 i(k_0 x-\omega_0 t)}
+ c.c. .
\end{eqnarray*}
By inserting this ansatz into \eqref{eq:3a1}
and by equating the coefficients in front of $  \varepsilon^j e^{i(k_0 x-\omega_0 t)} $ to zero, for $ j = 1,2,3 $, we find, as in the introduction, the linear dispersion relation $ \omega_0^2 = k_0^2 + 1 $, the 
linear group velocity $ c_g = k_0/\omega_0  $, and 
the NLS equation
\begin{eqnarray} \label{Aveq}
2 i \omega_0 \partial_{T} A_v & = & - 3 A_v |A_v|^2, \quad\\
2 i \omega_0 \partial_{T} A_w & = & (c_g^2-1)\partial_{X}^2 A_w - 3 A_v^2 \overline{A}_w -  6 A_v A_w \overline{A}_v 
\label{Aweq}
 \\ && - 3 A_w^2 \overline{A}_v -  6 A_w A_v \overline{A}_w - 3 A_w^2 \overline{A}_w .  \nonumber
\end{eqnarray}

By equating the coefficient in front of $  \varepsilon^3 e^{3 i(k_0 x-\omega_0 t)} $ to zero we finally find 
\begin{eqnarray} \label{A3veq}
0 & = &  (9 k_2 - 9 \omega_0^2 + 1) A_{3,v} - A_v^3 , \\ 
0 & = &  (9 k_2 - 9 \omega_0^2 + 1) A_{3,w} - 3 A_v^2 A_w - 3 A_v A_w^2 - A_w^3.
\label{A3weq}
\end{eqnarray} 
Similarly to the splitting  of $ u = v + w $ 
we split the residual $ \textrm{Res} = \textrm{Res}_v + \textrm{Res}_w $, into
\begin{eqnarray} \label{vres}
 \textrm{Res}_v(v) & = & - \partial_t^2 v + \partial_x^2 v - v + v^3 ,
\\
 \textrm{Res}_w(v,w) & = & - \partial_t^2 w + \partial_x^2 w - w + 3 v^2 w + 3 v w^2 + w^3 .\label{wres}
\end{eqnarray}

\section{Estimates for the residual}

We have 
\begin{lemma}
Assume $ s_A -3 \geq s \geq 1 $.
Let $ (A_{v},A_{w}) \in C([0,T_0],\C \times H^{s_A}) $ be a solution of the NLS equation 
\eqref{Aveq}-\eqref{Aweq} and let $ (A_{3,v},A_{3,w}) $ be defined as solution of
\eqref{A3veq}-\eqref{A3weq}. Then there exist $ \varepsilon_0 > 0 $ and $ C > 0 $ such that 
for all $ \varepsilon \in (0,\varepsilon_0) $ we have 
$$ 
\sup_{t \in [0,T_0/\varepsilon^2]} \|\textrm{\rm Res}_v( \varepsilon \Psi_v) \|_{H^s_{per}}
\leq C \varepsilon^4 
$$ 
and 
$$ 
\sup_{t \in [0,T_0/\varepsilon^2]} \|\textrm{\rm Res}_w( \varepsilon \Psi_v, \varepsilon \Psi_w) \|_{H^s}
\leq C \varepsilon^{7/2} .
$$
\end{lemma}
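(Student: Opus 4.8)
The plan is to substitute the improved approximation into the residuals \eqref{vres}--\eqref{wres}, expand into harmonics $e^{in\theta}$ with $\theta=k_0x-\omega_0 t$, and sort by powers of $\varepsilon$. Expressing the $t$- and $x$-derivatives through the scaled variables $X,T$, the linear operator $-\partial_t^2+\partial_x^2-1$ acts on a harmonic $B(X,T)e^{in\theta}$ as $\big[-(\mathcal D-in\omega_0)^2+(\varepsilon\partial_X+ink_0)^2-1\big]B$ with $\mathcal D=-\varepsilon c_g\partial_X+\varepsilon^2\partial_T$. On $e^{\pm i\theta}$ the $\mathcal O(\varepsilon)$-part vanishes by the dispersion relation $\omega_0^2=k_0^2+1$, the $\mathcal O(\varepsilon^2)$-part (the term $\propto\partial_X$) vanishes by the group velocity $c_g=k_0/\omega_0$, and the $\mathcal O(\varepsilon^3)$-part, namely $2i\omega_0\partial_T+(1-c_g^2)\partial_X^2$ acting on the amplitude, cancels against the $\mathcal O(\varepsilon^3)$ cubic contribution precisely by \eqref{Aveq} (for $v$) and \eqref{Aweq} (for $w$). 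On $e^{\pm 3i\theta}$ the $\mathcal O(\varepsilon^3)$-part cancels by \eqref{A3veq}--\eqref{A3weq}, where the nonvanishing factor $9k_0^2-9\omega_0^2+1=-8$ makes $A_{3,v},A_{3,w}$ well defined. Everything that remains is therefore of order $\varepsilon^4$ or higher.

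First I would bound $\mathrm{Res}_v$. Since $A_v$ and $A_{3,v}=-A_v^3/8$ depend on $T$ only, no $\partial_X$-derivatives occur, so $\mathcal D=\varepsilon^2\partial_T$ and every surviving term carries an odd power of $\varepsilon$; the lowest is $\mathcal O(\varepsilon^5)$, stemming from $-\varepsilon^5\partial_T^2A_v\,e^{i\theta}$ and from the $\varepsilon^5$ cross terms of $v^3$. As $\mathrm{Res}_v$ is a finite trigonometric polynomial in $x$ with coefficients that are bounded functions of $T$ on $[0,T_0]$ (here one uses that $A_v$ solves the smooth ODE \eqref{Aveq}, hence is $C^\infty$ in $T$), its $H^s_{per}$-norm is a finite weighted sum of these coefficients and is $\mathcal O(\varepsilon^5)\le C\varepsilon^4$, as claimed.

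For $\mathrm{Res}_w$ the same cancellations hold, but now $A_w=A_w(X,T)$ carries $X$-dependence, so a genuinely new mixed-derivative term survives at the next order: from $-\mathcal D^2$ one obtains $2\varepsilon^4 c_g\,\partial_X\partial_T A_w\,e^{i\theta}+\,\mathrm{c.c.}$, which is the leading remainder. The decisive tool is the dilation estimate: for $g\in H^s$,
\[
\big\|g(\varepsilon(\cdot-c_gt))\,e^{i(k_0\cdot-\omega_0 t)}\big\|_{H^s(\R)}\le C\varepsilon^{-1/2}\|g\|_{H^s},
\]
since translation by $c_gt$ and the unimodular phase $e^{-i\omega_0 t}$ are harmless, while the carrier $e^{ik_0x}$ concentrates the Fourier support near $k_0$ (so $\jap{k}^s\approx\jap{k_0}^s$ there) and the rescaling $g(\varepsilon\cdot)$ produces the factor $\varepsilon^{-1/2}$. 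Applied to the $\varepsilon^4$-term this yields the stated rate $\varepsilon^4\cdot\varepsilon^{-1/2}=\varepsilon^{7/2}$, provided $g=2c_g\partial_X\partial_T A_w\in H^s$; using \eqref{Aweq} to replace $\partial_T$ by $(c_g^2-1)\partial_X^2/(2i\omega_0)$ plus cubic terms (the latter estimated by the product inequalities A)--C)) this amounts to $\partial_X^3 A_w\in H^s$, the three lost derivatives reflected in $s_A-3\ge s$. The further surviving terms, $-\varepsilon^5\partial_T^2A_w$ and the $\varepsilon^5$ nonlinear cross terms, are subleading after rescaling, each of order $\varepsilon^{9/2}$.

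I expect the regularity bookkeeping to be the main obstacle rather than the (essentially algebraic) collection of harmonics. One must verify that every surviving term genuinely lies in $H^s$ before the $\varepsilon^{7/2}$-estimate is meaningful, and the most derivative-hungry term is $-\varepsilon^5\partial_T^2A_w\,e^{i\theta}$: differentiating \eqref{Aweq} once more in $T$ gives $\partial_T^2A_w\sim\partial_X^4A_w$, so its membership in $H^s$ in fact requires four $X$-derivatives of $A_w$, which is exactly why Theorem \ref{thmain} is phrased with $A_w\in H^{s+4}$. The care thus lies in systematically combining the NLS flow (to trade each $\partial_T$ for two $\partial_X$), the mixed periodic/real-line product estimates A)--C), and the dilation scaling, so that each remainder lands in $H^s$ with both the correct power of $\varepsilon$ and a finite number of derivatives of $A_v,A_w$.
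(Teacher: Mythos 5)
Your proposal is correct and follows essentially the same route as the paper's (very terse) proof: formal cancellation of the coefficients of $\varepsilon^j e^{i(k_0x-\omega_0 t)}$, $j\leq 3$, and of $\varepsilon^3 e^{3i(k_0x-\omega_0t)}$ via the dispersion relation, the group velocity, the NLS equations \eqref{Aveq}--\eqref{Aweq} and the algebraic equations \eqref{A3veq}--\eqref{A3weq}, followed by the loss of $\varepsilon^{-1/2}$ coming from the $L^2(\R)$-scaling $X=\varepsilon x$ for the $w$-part (and no such loss for the spatially periodic $v$-part); the paper simply declares this bookkeeping ``completely straightforward'' and defers it to \cite[Section 11.2]{SU17book}. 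One substantive remark: your correct observation that the surviving term $-\varepsilon^5\partial_T^2A_w\,e^{i(k_0x-\omega_0t)}$ requires, via \eqref{Aweq}, four $X$-derivatives of $A_w$ to lie in $H^s$ is in tension with the lemma's stated hypothesis $s_A-3\geq s$, under which one only gets $\partial_T^2A_w\in H^{s_A-4}=H^{s-1}$; so what your argument actually establishes is the lemma under the assumption $s_A-4\geq s$ (which is what Theorem \ref{thmain} provides, with $s_A=s+4$), and this points to an inaccuracy in the lemma's hypothesis rather than a flaw in your proof.
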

\noindent
{\bf Proof.}
By construction we have that the remaining terms in the residuals 
are formally of  order $ \mathcal{O}(\varepsilon^4) $. The loss
of $ \varepsilon^{-1/2} $ for $ \textrm{Res}_w( \varepsilon \Psi_v, \varepsilon \Psi_w) $ comes from the scaling properties
of the $ L^2(\R) $-norm w.r.t. the scaling $ X = \varepsilon x $. 
We refrain from giving the detailed estimates which are completely straightforward, cf. \cite[Section 11.2]{SU17book}. 
\qed

\section{The error estimates}

The subsequent error estimates adapt the proof in \cite[Section 11.2]{SU17book}. We introduce error functions $\varepsilon^{\beta} R_v$ and $\varepsilon^{\beta} R_w$ with $\beta =  3/2$ by:
$$ 
v = \varepsilon \Psi_v + \varepsilon^{\beta} R_v , \qquad 
w = \varepsilon \Psi_w + \varepsilon^{\beta} R_w,
$$
where
$$ 
R_v(x,t) = R_v(x+ 2 \pi/k_0,t), \qquad R_w(\cdot,t) \in L^2(\R) .
$$
The error functions $ R _v $ and $ R_w $ satisfy
\begin{eqnarray} \label{huerrv}
\partial_t^2 R_v & = & \partial_x^2 R_v - R_v + \varepsilon^2f_v , \\ 
\partial_t^2 R_w & = & \partial_x^2 R_w - R_w + \varepsilon^2f_w, \label{huerrw}
\end{eqnarray}
where 
\begin{eqnarray*}
\varepsilon^2 f_v & = &  3 \varepsilon^2 \Psi_v^2 R_v + 3  \varepsilon^{1+\beta}
\Psi_v R_v^2 + \varepsilon^{2 \beta} R_v^3 + \varepsilon^{- \beta} \textrm{Res}_v(\varepsilon \Psi_v),\\
\varepsilon^2 f_w & = &
 3 \varepsilon^2 (\Psi_v+ \Psi_w)^2 R_w 
+ 3 \varepsilon^2  \Psi_w^2 R_v  + 6 \varepsilon^2 \Psi_v \Psi_w  R_v \\ 
&& + 3  \varepsilon^{1+\beta} (\Psi_v+ \Psi_w) R_w^2 
+ 3  \varepsilon^{1+\beta} \Psi_w R_v^2 + 6  \varepsilon^{1+\beta} \Psi_w R_v R_w
\\ && 
+ 3 \varepsilon^{2 \beta} R_v^2 R_w + 3 \varepsilon^{2 \beta} R_v R_w^2 + \varepsilon^{2 \beta} R_w^3 
+ \varepsilon^{- \beta} \textrm{Res}_w(\varepsilon \Psi_v,\varepsilon \Psi_w).
\end{eqnarray*}
The associated equations 
 in 
 Fourier space, namely
 \begin{eqnarray*}
\partial_t^2 \widehat{R}_q & = & -   \omega^2 \widehat{R}_q  +\varepsilon^2\widehat{f}_q ,
\end{eqnarray*}
with $ q = v,w $ and $ \omega(k) = \sqrt{k^2 + 1} $, are written as a first order system
 \begin{eqnarray*}
\partial_t \widehat R_{q,1} &= & i \omega \widehat R_{q,2} , \\
\partial_t \widehat R_{q,2} &= & i \omega \widehat R_{q,1} + \varepsilon^2  \frac{1}{i \omega}
\widehat f_q.
\end{eqnarray*}
This system is abbreviated  as
\[
\partial_t \widehat{\mathcal{R}}_q(k,t) = \widehat{\Lambda}(k) \widehat{\mathcal{R}}_q(k,t) + \varepsilon^2 \widehat{F}_q(k,t),
\]
with 
$$ 
\widehat{\Lambda}(k) = \left(\begin{array}{cc} 0 & i \omega(k) \\  i \omega(k) & 0 \end{array} \right), \qquad
 \widehat{F}_q(k,t)= \left(\begin{array}{c} 0  \\  \frac{1}{i \omega} \widehat{f}_q(k,t)\end{array} \right).
$$ 
We use the variation of constant formula
\[
\widehat{\mathcal{R}}_q (k,t) = e^{t \widehat{\Lambda}(k) } \widehat{\mathcal{R}}_q(k,0) + \varepsilon^2 \int^t_0 
e^{(t-\tau)\widehat{\Lambda}(k)
  } \widehat{F}_q(k,\tau) d \tau
\]
to estimate the solutions of this system.
We start with the estimate for the linear semigroup.
\begin{lemma}
The semigroup $(e^{t \widehat{\Lambda}(k) })_{t\geq 0}$ is uniformly bounded in every $L^2_{s}$ and every $ \ell^2_{s} $, 
i.e., for every $ s \geq 0 $
there exists a $ C > 0 $ such that 
$$\sup_{t\in \R} \|e^{t \widehat{\Lambda}(k) }\|_{L^2_{s} \to L^2_{s}}+ \sup_{t\in \R} \|e^{t \widehat{\Lambda}(k) }\|_{\ell^2_{s} \to  \ell^2_{s}} \leq C.$$
\end{lemma}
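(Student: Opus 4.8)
The plan is to exploit the explicit algebraic structure of the $2\times 2$ matrix $\widehat{\Lambda}(k)$, whose two entries are $i\omega(k)$ with $\omega(k)=\sqrt{k^2+1}$ real and strictly positive. The key observation is that $\widehat{\Lambda}(k)$ is skew-Hermitian: since $\omega(k)\in\R$ one has $\widehat{\Lambda}(k)^* = -\widehat{\Lambda}(k)$, equivalently its eigenvalues $\pm i\omega(k)$ are purely imaginary. Consequently $e^{t\widehat{\Lambda}(k)}$ is a \emph{unitary} matrix on $\C^2$ for every $t\in\R$ and every $k$, so that its Euclidean operator norm is exactly $1$, uniformly in both $t$ and $k$. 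If one prefers a concrete formula, writing $\sigma=\bigl(\begin{smallmatrix}0&1\\1&0\end{smallmatrix}\bigr)$ and using $\sigma^2=I$ gives
$$
e^{t\widehat{\Lambda}(k)} = \cos(\omega(k) t)\,I + i\sin(\omega(k) t)\,\sigma = \begin{pmatrix}\cos(\omega(k) t) & i\sin(\omega(k) t)\\[2pt] i\sin(\omega(k) t) & \cos(\omega(k) t)\end{pmatrix},
$$
from which both unitarity and the pointwise bound $\|e^{t\widehat{\Lambda}(k)}\|_{\C^2\to\C^2}=1$ can be read off directly.

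The second step is to transfer this pointwise-in-$k$ bound to the weighted spaces. The crucial point is that the semigroup acts as a Fourier multiplier: for each fixed $k$ it is the matrix $e^{t\widehat{\Lambda}(k)}$ applied to the two-component vector $\widehat{\mathcal{R}}_q(k)$, and it does not couple distinct frequencies. The weights $\jap{\cdot}^{s/2}$ defining the $L^2_s$- and $\ell^2_s$-norms are likewise diagonal in $k$, hence they commute with the multiplier. Therefore, in $L^2_s$,
$$
\|e^{t\widehat{\Lambda}}\widehat{\mathcal{R}}\|_{L^2_s} = \|\,\jap{\cdot}^{s/2}\,e^{t\widehat{\Lambda}(\cdot)}\widehat{\mathcal{R}}(\cdot)\|_{L^2} = \|\,e^{t\widehat{\Lambda}(\cdot)}\,\jap{\cdot}^{s/2}\widehat{\mathcal{R}}(\cdot)\|_{L^2} \le \|\,\jap{\cdot}^{s/2}\widehat{\mathcal{R}}(\cdot)\|_{L^2} = \|\widehat{\mathcal{R}}\|_{L^2_s},
$$
where the inequality uses $|e^{t\widehat{\Lambda}(k)}\xi| \le \|e^{t\widehat{\Lambda}(k)}\|_{\C^2\to\C^2}\,|\xi| = |\xi|$ for every $\xi\in\C^2$, integrated in $k$. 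The identical computation with the integral replaced by a sum over $k\in\Z$ yields the bound in $\ell^2_s$. Summing the two estimates gives the claim (with $C=2$, indeed with constant $1$ in each space separately).

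I do not expect a genuine obstacle here: once the skew-Hermitian structure is recognized the result is essentially immediate, and the only point requiring a line of care is the commutation of the weight $\jap{\cdot}^{s/2}$ with the multiplier $e^{t\widehat{\Lambda}(\cdot)}$, which holds because both are diagonal in the frequency variable. The uniformity in $k$ is automatic, since the operator norm of $e^{t\widehat{\Lambda}(k)}$ equals $1$ independently of the value of $\omega(k)$, and the positivity $\omega(k)\ge 1$ guarantees that no degeneracy occurs.
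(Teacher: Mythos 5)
Your proof is correct and takes essentially the same route as the paper's: a uniform-in-$k$, uniform-in-$t$ bound on the matrix exponential $e^{t \widehat{\Lambda}(k)}$ acting on $\C^2$, which is then transferred to $L^2_s$ and $\ell^2_s$ using that the semigroup is a Fourier multiplier commuting with the weights $\jap{\cdot}^{s/2}$. The only difference is in how the pointwise bound is obtained: the paper diagonalizes $\widehat{\Lambda}(k) = S \widehat{D}(k) S^{-1}$ and bounds the norm by $\|S\|_{\C^2\to\C^2}\|S^{-1}\|_{\C^2\to\C^2}$, whereas you observe that $\widehat{\Lambda}(k)$ is skew-Hermitian, so that $e^{t\widehat{\Lambda}(k)}$ is unitary with operator norm exactly $1$ --- a slightly sharper constant, but both arguments rest on the same fact that the eigenvalues $\pm i\omega(k)$ are purely imaginary.
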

\noindent 
{\bf Proof.}
In the following $ X^{s} $ denotes $L^2_{s}$ or $ \ell^2_{s} $.
We have $\widehat{\Lambda}(k) = S \widehat{D}(k)S^{-1}$ and as a consequence
$
e^{t  \widehat{\Lambda}(k) } = S e^{t \widehat{D}(k) } S^{-1} $
where  
\[
S = \begin{pmatrix}1 & 1
    \\ 1 & -1 \end{pmatrix} \qquad {\rm  and} \qquad \widehat{D}(k) = \begin{pmatrix} i \omega(k) & 0 \\ 0 & -i \omega(k)\end{pmatrix}.
\]
The estimate follows from
\[
\|e^{t \widehat{\Lambda}(k)} \widehat{u}\|_{X^{s}} \leq \sup_{k\in \R} \|e^{t \widehat{\Lambda}(k)
  }\|_{\C^2 \to \C^2} \|\widehat{u}\|_{X^{s}},
\]
and
\begin{align*}
\sup_{k\in \R} \|e^{t \widehat{\Lambda}(k) }\|_{\C^2 \to \C^2}
&\leq \|S\|_{\C^2 \to \C^2} \sup_{k\in \R} \|e^{t \widehat{D}(k) }\|_{\C^2 \to \C^2}
\cdot \|S^{-1}\|_{\C^2 \to \C^2} \\
&\leq \|S\|_{\C^2 \to \C^2}  \|S^{-1}\|_{\C^2 \to \C^2} < \infty.
\end{align*}
\qed

For the nonlinear terms we have 
\begin{lemma} \label{sync236}
For every ${s}>  1/2 $ there is a $C > 0$ such that
for all $ \varepsilon \in (0,1] $ we have 
\begin{eqnarray*}
\|\widehat{F}_v\|_{\ell^2_{s}} & \leq & C \left( \|\widehat{\mathcal{R}}_v\|_{\ell^2_{s}} + \varepsilon^{\beta - 1}
  \|\widehat{\mathcal{R}}_v\|^2_{\ell^2_{s}} + \varepsilon^{2\beta - 2} \|\widehat{\mathcal{R}}_v|^3_{\ell^2_{s}} + 1\right), \\ 
  \|\widehat{F}_w\|_{L^2_{s}} & \leq & 
   C \left( \|\widehat{\mathcal{R}}_v\|_{\ell^2_{s}} + \|\widehat{\mathcal{R}}_w\|_{L^2_{s}} + \varepsilon^{\beta - 1}
(\|\widehat{\mathcal{R}}_v\|_{\ell^2_{s}} + \|\widehat{\mathcal{R}}_w\|_{L^2_{s}})^2\right.\\&&\left.+\varepsilon^{2\beta - 2} (\|\widehat{\mathcal{R}}_v\|_{\ell^2_{s}} + \|\widehat{\mathcal{R}}_w\|_{L^2_{s}})^3 + 1\right).
\end{eqnarray*}
\end{lemma}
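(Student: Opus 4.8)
The plan is to reduce the statement to the product estimates of Section 2 (the algebra properties A, B and the mixed estimate C) together with the residual bounds just proved, and then to bookkeep the powers of $\varepsilon$. Since $\omega(k)=\sqrt{k^2+1}\geq 1$, the Fourier multiplier $1/(i\omega(k))$ has modulus at most one and is therefore bounded on $\ell^2_s$ and on $L^2_s$; as the first component of $\widehat{F}_q$ vanishes, this gives $\|\widehat{F}_v\|_{\ell^2_s}\leq C\|f_v\|_{H^s_{per}}$ and $\|\widehat{F}_w\|_{L^2_s}\leq C\|f_w\|_{H^s}$, so it suffices to bound $\|f_v\|_{H^s_{per}}$ and $\|f_w\|_{H^s}$. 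First I would record uniform bounds on the approximation: for $t\in[0,T_0/\varepsilon^2]$ and $\varepsilon\in(0,1]$ the profiles $\Psi_v,\Psi_{3,v}$ are trigonometric polynomials in $x$ whose coefficients depend continuously on $T=\varepsilon^2 t\in[0,T_0]$, hence are bounded in $H^s_{per}$ uniformly in $t$ and $\varepsilon$; the profiles $\Psi_w,\Psi_{3,w}$ carry the slowly varying envelope $A_w(\varepsilon(x-c_g t),\varepsilon^2 t)$, so they are bounded in $L^\infty$ with derivatives of size $O(\varepsilon)$ and thus act as multipliers on $H^s(\R)$ with $O(1)$ norm, while their $L^2(\R)$-norm scales like $\varepsilon^{-1/2}$. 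All of this follows from $(A_v,A_w)\in C([0,T_0],\C\times H^{s+4})$ and the embedding $H^s\hookrightarrow C^0_{b,unif}$ for $s>1/2$.

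For the periodic part I would expand $f_v=3\Psi_v^2 R_v+3\varepsilon^{\beta-1}\Psi_v R_v^2+\varepsilon^{2\beta-2}R_v^3+\varepsilon^{-\beta-2}\mathrm{Res}_v(\varepsilon\Psi_v)$ and apply the $H^s_{per}$-algebra property A to each monomial, pulling out the uniform bound on $\Psi_v$; this yields contributions $\leq C\|R_v\|_{H^s_{per}}$, $\leq C\varepsilon^{\beta-1}\|R_v\|^2_{H^s_{per}}$ and $\leq C\varepsilon^{2\beta-2}\|R_v\|^3_{H^s_{per}}$. The residual bound $\|\mathrm{Res}_v(\varepsilon\Psi_v)\|_{H^s_{per}}\leq C\varepsilon^4$ together with $\beta=3/2$ turns the last term into $C\varepsilon^{1/2}\leq C$, which is the ``$+1$''. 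Since the first component of $\widehat{\mathcal{R}}_v$ equals $\widehat{R}_v$, one has $\|R_v\|_{H^s_{per}}\leq\|\widehat{\mathcal{R}}_v\|_{\ell^2_s}$, and collecting by degree in $R_v$ gives the asserted bound for $\widehat{F}_v$.

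The estimate for $f_w$ is the heart of the matter. Here I would expand $f_w$ and sort its monomials according to which factors are periodic ($\Psi_v,R_v$) and which decay ($\Psi_w,R_w$); every monomial contains at least one decaying factor, so each product genuinely lies in $H^s(\R)$. Products whose only non-approximation factors are copies of $R_w$ are handled by the $L^\infty$-multiplier bound of the first step combined with the $H^s(\R)$-algebra property B; products in which the decay is supplied by a $\Psi_w$-factor while $R_v$ is periodic, as well as products mixing a periodic $R_v$ with a decaying $R_w$, are handled by the mixed estimate C. The residual term, using $\|\mathrm{Res}_w(\varepsilon\Psi_v,\varepsilon\Psi_w)\|_{H^s}\leq C\varepsilon^{7/2}$ and $\beta=3/2$, reduces to the constant ``$+1$''. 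With $\|R_v\|_{H^s_{per}}\leq\|\widehat{\mathcal{R}}_v\|_{\ell^2_s}$ and $\|R_w\|_{H^s}\leq\|\widehat{\mathcal{R}}_w\|_{L^2_s}$, collecting by total degree in $(R_v,R_w)$ produces the claimed inequality.

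The hard part will be to keep the coefficients of the linear-in-$R$ terms genuinely $O(1)$ rather than letting them degrade to $\varepsilon^{-1/2}$. Treated naively through C, a term such as $3\Psi_w^2 R_v$ wants to cost $\|\Psi_w^2\|_{H^s}\sim\varepsilon^{-1/2}$ because of the scaling of the $L^2(\R)$-norm of the envelope, and one must therefore exploit the scale-invariant $L^\infty$/multiplier bounds on the $\Psi_w$-factors wherever possible; this is the only point where the splitting into periodic and decaying parts is more than bookkeeping. Getting the linear coefficient to be uniform in $\varepsilon$ is essential, since only then does the subsequent Gronwall argument applied to the variation-of-constants formula, with its prefactor $\varepsilon^2$, close on the long $O(1/\varepsilon^2)$ time scale. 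Finally, the orders $\varepsilon^4$ and $\varepsilon^{7/2}$ of the two residuals, and the choice $\beta=3/2$, are calibrated precisely so that the residual contributions collapse to the harmless constant that appears as ``$+1$'' in both estimates.
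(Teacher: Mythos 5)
Your overall reduction (boundedness of the multiplier $1/\omega$, uniform bounds on the approximation, product estimates from Section~2, residual bounds, $\varepsilon$-bookkeeping) has the same skeleton as the paper's proof, with one substitution: where you control $\Psi_w$-factors through $L^\infty$-based multiplier bounds on $H^s(\R)$, the paper controls them through the scale-invariant Fourier norms $\|\widehat\Psi_w\|_{L^1_s}$ and Young's inequality; for all of $f_v$, and for every monomial of $f_w$ containing at least one factor $R_w$, both mechanisms work and give $\varepsilon$-uniform constants. The genuine gap is exactly at the terms you flag as ``the hard part'', namely $3\Psi_w^2R_v$ and $6\Psi_v\Psi_wR_v$ (and, more mildly, $3\varepsilon^{\beta-1}\Psi_wR_v^2$): your proposed fix --- exploiting the multiplier bounds on the $\Psi_w$-factors --- cannot be implemented there. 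A multiplier bound for $\Psi_w$ on $H^s(\R)$ needs a factor already lying in $H^s(\R)$ to act on, and in these monomials there is none: $R_v$ is periodic, and the spatial decay of the product must be supplied by $\Psi_w$ itself. What is really needed is a bound for multiplication by $\Psi_w^2$ (resp.\ $\Psi_v\Psi_w$) as an operator from $H^s_{per}$ into $H^s(\R)$, and that operator norm is genuinely of order $\varepsilon^{-1/2}$: testing on $R_v\equiv 1$ gives $\|\Psi_w^2\cdot 1\|_{H^s}\geq\|\Psi_w^2\|_{L^2}\sim\varepsilon^{-1/2}$, and the factor $1/\omega$ in $\widehat F_w$ does not help because $\widehat{\Psi_w^2R_v}$ is concentrated near the bounded frequencies $0,\pm 2k_0$. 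So for these terms the inequality you want --- an $\varepsilon$-uniform constant in front of $\|\widehat{\mathcal R}_v\|_{\ell^2_s}$ --- is false, not merely unproved, and no choice of norms on the $\Psi_w$-factors can recover it.

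You should also know that the paper's own proof is vulnerable at the same spot: its asserted convolution estimate $\|\widehat\Psi_{q_1}\ast\widehat\Psi_{q_2}\ast\widehat R_{q_3}\|_{X^s_q}\leq C\|\widehat\Psi_{q_1}\|_{Y^s_{q_1}}\|\widehat\Psi_{q_2}\|_{Y^s_{q_2}}\|\widehat R_{q_3}\|_{X^s_{q_3}}$ is invoked with $q_3=v$ and $q_1$ or $q_2$ equal to $w$, where it amounts to bounding the $L^2$-norm of the convolution of an $L^1$-function with a discrete measure by the $L^1$-norm of the function times the $\ell^2$-norm of the coefficients; this fails under concentration, and $\widehat\Psi_w(k)=\varepsilon^{-1}\widehat A_w((k-k_0)/\varepsilon)+\dots$ concentrates at precisely the rate producing the $\varepsilon^{-1/2}$. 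A genuine repair needs an additional idea rather than sharper product estimates: for instance, observe that equation \eqref{huerrv} is closed in $R_v$ and its effective forcing $\varepsilon^{-\beta-2}\Res_v$ is only $\mathcal{O}(\varepsilon^{1/2})$ (since $\|\Res_v\|_{H^s_{per}}\leq C\varepsilon^4$ while $\beta+2=7/2$), so a first Gronwall argument yields $\|\widehat{\mathcal R}_v(t)\|_{\ell^2_s}=\mathcal{O}(\varepsilon^{1/2})$ on $[0,T_0/\varepsilon^2]$; feeding this into the $R_w$-equation, the $\varepsilon^{-1/2}$ loss on the problematic terms is exactly compensated and the estimate for $R_w$ closes. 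Neither your proposal nor the lemma as stated (with its $\varepsilon$-uniform constant multiplying $\|\widehat{\mathcal R}_v\|_{\ell^2_s}$) contains this step.
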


\begin{proof}
Let $q\in\{v,w\}$, and $X^s_q, Y^s_q$ be $\ell^2_s,\ell^1_s$ for $q=v$ and $L^2_s, L^1_s$ for $q=w$.
Then the estimates follow from
\begin{align*}& 
\Big\|\frac{1}{\omega} \widehat u\Big\|_{X^s_q} 
\leq \|\widehat u\|_{X^s_q}, \quad
\|\varepsilon^{-\beta} \widehat{\Res_q}\|_{X^s_q} 
\leq C,
\end{align*}
as well as from
\begin{align*}
\left\|\widehat \Psi_{q_1} \ast \widehat \Psi_{q_2} \ast \widehat R_{q_3}\right\|_{X^s_q} 
&\leq C \|\widehat \Psi_{q_1}\|_{Y^s_{q_1}} \|\widehat \Psi_{q_2}\|_{Y^s_{q_2}} \|\widehat R_{q_3} \|_{X^s_{q_3}},
\\
\|\widehat \Psi_{q_1} \ast \widehat R_{q_2} \ast \widehat R_{q_3}\|_{X^s_q} 
&\leq C \|\widehat \Psi_{q_1}\|_{Y^s_{q_1}} \|\widehat R_{q_2} \|_{X^s_{q_2}}\|\widehat R_{q_3} \|_{X^s_{q_3}}, 
\\
\|\widehat R_{q_1} \ast \hat R_{q_2} \ast \hat R_{q_3}\|_{X^s_q} 
&\leq C \|\widehat R_{q_1} \|_{X_{q_1}^s} \|\widehat R_{q_2} \|_{X_{q_2}^s}\|\widehat R_{q_3} \|_{X_{q_3}^s},
\end{align*}
where for $ q = v $ we have $ q_1 = q_2 = q_3 = v $ and where for $ q = w $ at least one of the 
indices $ q_j $ equals $ w $. 
Finally for estimating $ \|\widehat \Psi_q \|_{Y_q^s} $ 
we use 
\begin{align*}
& 2\Big\|\frac{1}{\varepsilon} \widehat A_q \Big( \frac{\cdot -k_0}{\varepsilon}\Big) +
\mathrm{h.o.t.}\Big\|_{Y^s_q} 
\leq C \Big\|\frac{1}{\varepsilon} \widehat A_q \Big( \frac{\cdot}{\varepsilon}\Big)
\Big\|_{Y_q^s}+ \mathrm{h.o.t.} \\
&\leq C \| \widehat A_q\|_{Y^s_q} + \mathrm{h.o.t.} \leq C \| \widehat A_q\|_{X^{s+1}_q} + \mathrm{h.o.t.}.
\end{align*}
Note that we estimated $ \widehat \Psi_q $ in $ Y^s_q $ and not in $ X_q^s $ since 
$\|\widehat \Psi_q\|_{X_q^s} = \mathcal{O} (\varepsilon^{-1/2})$ for $q = w$ which is too large 
to derive estimates on the natural time scale $\mathcal{O}(1/\varepsilon^2)$ with respect to $t$.
\end{proof}

Using the previous lemmas shows that
$$ 
Z(t) = \|\widehat{\mathcal{R}}_v(t)\|_{\ell^2_{s}} + \|\widehat{\mathcal{R}}_w(t)\|_{L^2_{s}}
$$
satisfies
\begin{align*}
Z(t)
&\leq C\varepsilon^2 \int^t_0 \left( Z(\tau)+
  \varepsilon^{\beta- 1} Z(\tau)^2 + \varepsilon^{2\beta-2}
 Z(\tau)^3 + 1\right) d \tau\\
&\leq C\varepsilon^2 \int^t_0 \left( Z(\tau) +
  2 \right) d \tau
\leq 2 C T_0 + C \varepsilon^2 \int^t_0 Z(\tau) d
\tau  
\end{align*}
which holds as long as  
\begin{equation} \label{witt}
\varepsilon^{\beta -1} Z(\tau)^2+
\varepsilon^{2\beta- 2}Z(\tau)^3 \leq 1.
\end{equation}
Applying Gronwall's inequality yields
\[
Z(t) =  \|\widehat{\mathcal{R}}_v(t)\|_{\ell^2_{s}} + \|\widehat{\mathcal{R}}_w(t)\|_{L^2_{s}}\leq 2CT_0 e^{C\varepsilon^2t} \leq 2 CT_0
 e^{CT_0} = M
\]
for all $t\in [0,T_{0}/\varepsilon^2]$. Choosing $\varepsilon_0 > 0$ such that
$\varepsilon_0^{\beta - 1} M^2 + \varepsilon_0^{2\beta - 2} M^3 \leq 1$ 
ensures that condition \eqref{witt} is satisfied. This completes the proof of our approximation result.

\begin{remark}{\rm 
Local existence and uniqueness of solutions to the nonlinear wave equation \eqref{eq:3a1}, as well as to the error equations \eqref{huerrv}-\eqref{huerrw}, hold in the function spaces in which the error estimates were derived.
}\end{remark}

\section{Discussion}

\label{secdisc}

We strongly expect that the existing theory about the validity of NLS approximations, 
i.e., the handling of quadratic nonlinearities by normal form transformations, cf. \cite{Ka87}, the handling of 
resonances, cf. \cite{Schn05}, or the handling of quasilinear systems, cf. \cite{Du17}, can be transferred in the same way.
Therefore, we strongly expect that an approximation result, similar to Theorem \ref{thmain},
holds for the water wave problem as well. This will be the subject of future research.
The present paper goes beyond some formal arguments and 
is a strong indication for the occurrence of 
freak or rogue wave behavior in almost 
all dispersive wave system where the defocusing NLS equation occurs as 
an amplitude equation in the above sense.

The Peregrine soliton is not the only interesting solution where our theory applies, too. 
For illustration we recall some of these solutions from the existing literature.
The Peregrine soliton can be obtained from the family of solutions
$$ 
A(X,T) = e^{iT}\left(1 + \frac{2(1-2a)\cosh(RT) + i R \sinh(RT)}{\sqrt{2a} \cos(\Omega X)- \cosh(RT)} \right)
$$
in the limit $  a \to 1/2 $, cf. \cite{AAT09}, where
$$ 
R = \sqrt{8a(1-2a)}, \qquad \Omega = 2 \sqrt{1-2a}.
$$
For $ 0 < a < 1/2 $ the members of this family  are called Akhmediev Breathers.
They are spatially periodic and localized in time 
above a time-periodic spatially constant background  state.
See  Figure \ref{sec6fig1_a}.
 For $ a > 1/2 $ 
the members of this family are called Kuznetsov-Ma solitons.
They are 
time-periodic and spatially localized  
above a time-periodic  spatially constant background  state.
See  Figure \ref{sec6fig1_b}.
Our approximation result, Theorem \ref{thmain}, easily applies for all members of the Kuznetsov-Ma family, too.

\begin{figure}
  \centering
  \begin{minipage}[b]{0.45\textwidth}
    \includegraphics[width=\textwidth]{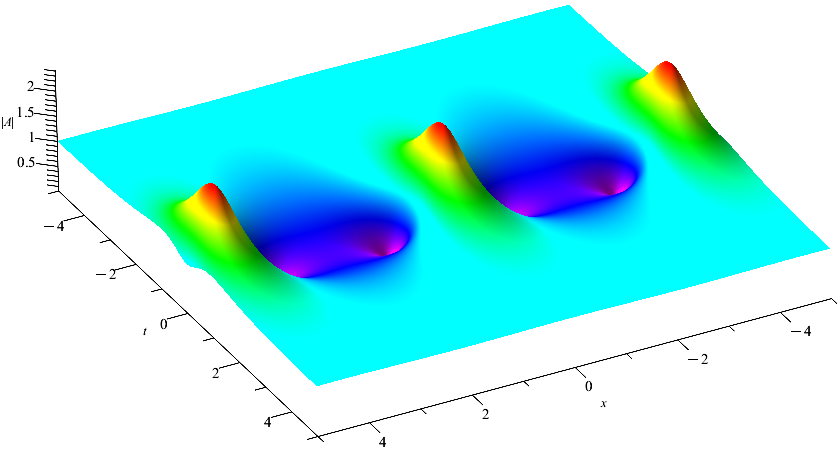}
    \caption{Absolute value of an Akhmediev breather (periodic in space) for $a=\frac{1}{4}$.\label{sec6fig1_a}}
  \end{minipage}
  \hfill
  \begin{minipage}[b]{0.45\textwidth}
    \includegraphics[width=\textwidth]{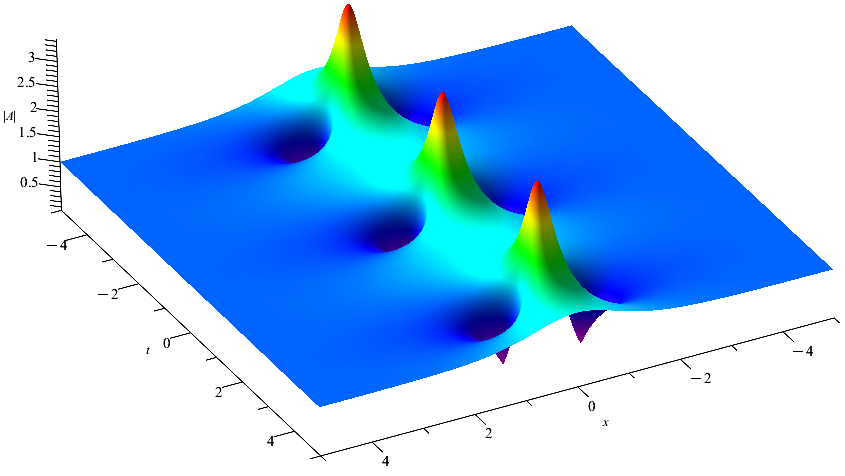}
    \caption{Absolute value of a Kuznetsov-Ma soliton (periodic in time) for $a=\frac{3}{4}$.\label{sec6fig1_b}}
  \end{minipage}
\end{figure}

Finally, the Peregrine soliton can also be seen as 
a member of   a family  of solutions of the form
$$ 
A_j(X,T) = e^{iT} \left((-1)^j + \frac{G_j + i H_j}{D_j} 
\right) 
$$ 
where the $ G_j $ and  $ H_j $ are suitable chosen complex polynomials and the $ D_j $ are 
suitable chosen positive real polynomials, cf. \cite{AAS09}. For the Peregrine soliton we have 
$ j = 1 $, $ G_1 = 4 $, $H_1 = 8X $, and $ D_1 = 1 + 4 X^2  + 4 T^2 $.  
Other members of this family are plotted in  Figure \ref{sec6fig2}.
Our approximation result, Theorem \ref{thmain}, applies to all members of this  family.

\begin{figure}
  \centering
    \includegraphics[width=2.5in]{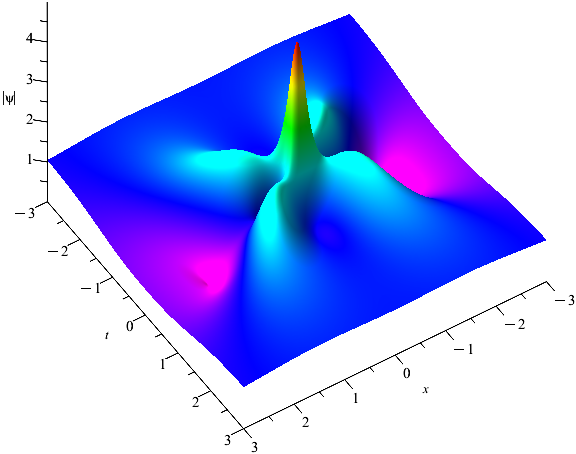}
 \qquad
    \includegraphics[width=2.5in]{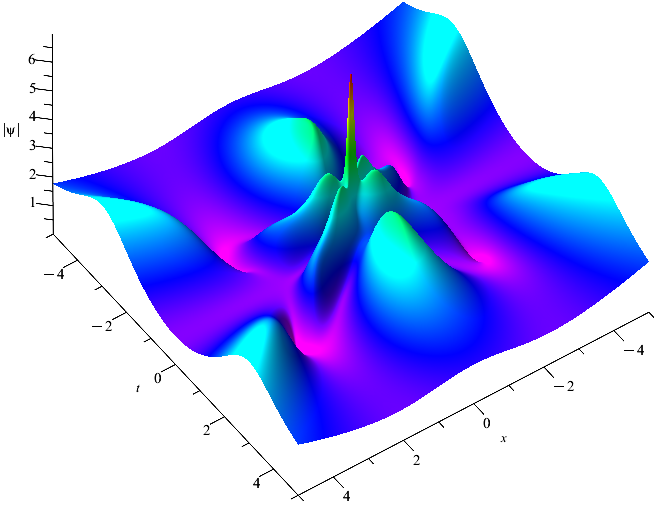}
 \caption{Absolute value of the higher order Peregrine soliton for $j=2,3$.\label{sec6fig2}}
\end{figure}

\bibliographystyle{alpha}
\bibliography{perebib}

\end{document}